\newtheorem{theorem}{Theorem}[section]
\newtheorem{lemma}[theorem]{Lemma}
\theoremstyle{definition}
\newtheorem*{rem}{Remarks}
\newcommand{\R}{\mathbb{R}}
\newcommand{\N}{\mathbb{N}}
 \newcommand{\av}{\arrowvert}
\numberwithin{equation}{section}
\begin{document}
\title{Quasiregular dynamics on the $n$-sphere}
\author{Alastair N. Fletcher
\& Daniel A. Nicks}

\maketitle

\begin{abstract}
In this article, we investigate the boundary of the escaping set $I(f)$ for quasiregular mappings on $\R^{n}$, both in the uniformly
quasiregular case and in the polynomial type case. The aim is to show that $\partial I(f)$ is the Julia set $J(f)$ when the latter is defined, and shares properties with the Julia set when $J(f)$ is not defined.

2000 MSC: 30C65 (primary), 30D05, 37F10 (secondary).
\end{abstract}

\section{Introduction}

There has been much recent interest in complex dynamics, the study of iteration of analytic functions in the plane. See for example \cite{Beardon}
or \cite{Milnor} for an introduction to the dynamics of rational maps and \cite{Bergweiler1} for an introduction to the transcendental
case. Quasiregular mappings are a natural generalization of analytic functions to higher dimensions, displaying many similar properties. The standard reference for the theory of quasiregular mappings is Rickman's monograph \cite{Rickman}.

While there has been some study of the dynamics of quasiregular mappings (for example \cite{Bergweiler2, BFLM, Siebert}), for the most part this has been restricted to the case of uniformly quasiregular mappings (introduced in \cite{IM}
and see also \cite{HMM, Mayer1, Mayer2}), that is, those quasiregular mappings for which all the iterates have a common bound on the distortion. If all the iterates of a quasiregular mapping $f$ have distortion bounded by $K$, then $f$ is called uniformly $K$-quasiregular (henceforth called $K$-uqr for brevity). This condition allows one to carry over the ideas of Fatou and Julia sets from complex dynamics to quasiregular dynamics, at least in this special case. While these notions may not make sense for an arbitrary quasiregular mapping, the notion of the escaping set always does. The escaping set for a quasiregular mapping $f$ is defined to be
\begin{equation}\label{escapingset}
I(f) := \{ x \in \overline{\R^{n}} : f^{k}(x) \text{ is defined for all } k \in \N, \lim _{k \rightarrow \infty} f^{k}(x) = \infty \}.
\end{equation}
It was proved by Eremenko in \cite{Eremenko} that the escaping set $I(f)$ of a transcendental analytic function in the plane is non-empty and that
the boundary of the escaping set is the Julia set $J(f)$. The same result for meromorphic functions was established by Dominguez in \cite{Dominguez}. In \cite{BFLM} it was shown that if a quasiregular mapping $f:\R^{n} \rightarrow \R^{n}$ grows sufficiently fast then $I(f)$ is non-empty,
and further, $I(f)$ contains an unbounded component.

These results raise the question of whether the boundary of the escaping set of an arbitrary quasiregular mapping possesses properties typically associated with a Julia set, even though the Julia set may not be defined. In this paper we will show that the boundary $\partial I(f)$ is perfect for some classes of quasiregular mappings from $\R^{n}$ to itself; that is, it contains no isolated points. It is well known that the Julia set of an analytic or meromorphic function is perfect.

A quasiregular mapping $f:\R^{n}\to\R^{n}$ is said to be of polynomial type if $ f(x)  \rightarrow \infty$ as $ x  \rightarrow \infty$, whereas $f$ has an essential singularity at infinity if this limit does not exist. Note that by (\ref{escapingset}), the point at infinity is contained in $I(f)$ if $f$ is of polynomial type, but not if $f$ has an essential singularity at infinity.
The degree of a polynomial type mapping may be thought of as a generalization of the degree of a polynomial. It can be defined by
\begin{equation}\label{eqn:def degree}
\operatorname{deg}(f):=\sup_{y\in\R^n}|f^{-1}(y)|,
\end{equation}
that is, the maximal number of pre-images of any value in $\R^n$. It is well known that $f$ is of polynomial type if and only if (\ref{eqn:def degree}) is finite (see \cite{Rickman}, as well as \cite{HK}, for further properties of polynomial type mappings).
The definition of the inner dilatation $K_I$ of a quasiregular map is postponed until the next section.
We then have the following result for quasiregular mappings of polynomial type.

\begin{theorem}
\label{polyqr}
Let $n \geq 2$ and $f:\R^{n} \rightarrow \R^{n}$ be $K$-quasiregular of polynomial type. If the degree of $f$ is greater than $K_{I}$, then $I(f)$ is a non-empty open set and $\partial I(f)$ is perfect.
\end{theorem}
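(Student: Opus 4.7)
The plan is to prove the two conclusions in turn.

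For the claim that $I(f)$ is non-empty and open, the approach is to invoke the polynomial growth estimate that follows from the hypothesis $\operatorname{deg}(f) > K_I$---namely, $|f(x)| \ge c|x|^{\alpha}$ for $|x|$ large with $\alpha > 1$ (cf.\ \cite{BFLM, HK, Rickman})---to obtain $R > 0$ such that $\{|x| > R\}$ is forward invariant and contained in $I(f)$. Then
\[
I(f) = \{\infty\} \cup \bigcup_{k \ge 0} f^{-k}(\{|x| > R\})
\]
is open as a union of preimages of an open set under the continuous iterates, and the complement $K(f) := \overline{\R^n} \setminus I(f)$ is compact.

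For perfectness of $\partial I(f)$, suppose for contradiction that $x_0 \in \partial I(f)$ is isolated, and pick $r > 0$ so that $B := B(x_0, r)$ satisfies $B \cap K(f) = \{x_0\}$. A key local observation is that no iterated preimage of $x_0$ can lie in $B \setminus \{x_0\}$, since such a point would be in $I(f)$ yet would eventually land on $x_0 \in K(f)$ under iteration. Since $\partial B \subset I(f)$ is compact, $|f^k|$ tends to infinity uniformly on $\partial B$; in particular, for any prescribed $N$ and all sufficiently large $k$, $f^k(\partial B) \subset \{|w| > N\}$, while $\{f^k(x_0)\}$ remains bounded. Applying the topological degree theory for quasiregular maps from \cite{Rickman}, the bounded component $C$ of $\R^n \setminus f^k(\partial B)$ contains both $x_0$ and $f^k(x_0)$, and $\operatorname{deg}(f^k, B, \cdot)$ is a constant $d_k$ on $C$ satisfying $d_k \ge i(x_0, f^k) \ge 1$.

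Two cases remain. If $x_0$ is not a fixed point of $f$, choose $k$ with $f^k(x_0) \ne x_0$; by the observation, no point of $B$ is then a preimage of $x_0$ under $f^k$, forcing $\operatorname{deg}(f^k, B, x_0) = 0$ and contradicting $d_k \ge 1$. The main obstacle is the fixed-point case, where $x_0$ is a trivial preimage of itself and the naive argument fails. The plan here is to exploit $\operatorname{deg}(f) > K_I$ essentially: since local indices are bounded by $K_I$, one has $|f^{-k}(x_0)| \ge \operatorname{deg}(f^k)/K_I(f^k) \ge (\operatorname{deg}(f) / K_I)^k \to \infty$. These infinitely many iterated preimages lie in the compact set $K(f)$ and, by the observation, outside $B$, so they accumulate at some $z^* \in K(f) \setminus B$ with $z^* \ne x_0$. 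Since $f^k(x_0) = x_0 \ne z^*$ for every $k$, the point $x_0$ is not a preimage of $z^*$; as $f^{-k}(z^*) \subset K(f)$ and $B \cap K(f) = \{x_0\}$, this gives $\operatorname{deg}(f^k, B, z^*) = 0$, while degree constancy on $C$ forces $\operatorname{deg}(f^k, B, z^*) = d_k \ge 1$, the desired contradiction.
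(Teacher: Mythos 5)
There is a genuine gap, and it sits exactly where the paper's key idea is required. The first half of your plan (growth estimate, forward-invariant neighbourhood of infinity, openness of $I(f)$) is fine and matches the paper, and your degree-theoretic treatment of perfectness works both when $x_0$ is not a fixed point and when $x_0$ is fixed but has some preimage other than itself (a single $y\in f^{-1}(x_0)\setminus\{x_0\}$ already suffices: $y$ is non-escaping, lies in $C\setminus B$, its $f^k$-preimages are non-escaping and so meet $B$ at most at $x_0$, and $f^k(x_0)=x_0\neq y$, giving $\operatorname{deg}(f^k,B,y)=0$ against $d_k\geq 1$; no accumulation point $z^*$ is needed). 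The case you cannot dispose of is $f^{-1}(x_0)=\{x_0\}$, where the entire backward orbit of $x_0$ is $\{x_0\}$ and there is no $z^*$. Your route around this rests on the claim that local indices are bounded by $K_I$, so that $|f^{-k}(x_0)|\geq \operatorname{deg}(f^k)/K_I(f^k)\to\infty$. That claim is false: $z\mapsto z^d$ on $\R^2$ is $1$-quasiregular with $i(0,f)=d$, and for $n\geq 3$ the known uqr power-type maps have bounded dilatation and arbitrarily large local index at their superattracting fixed points. Worse, Lemma \ref{deglem} shows that in precisely this totally invariant case one has $i(x_0,f)=\operatorname{deg}(f)>K_I(f)$ --- the opposite of what you assert, and the very inequality on which the whole theorem runs.

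The paper closes this case with Martio's local H\"older estimate (Theorem \ref{localholderthm}): since $i(x_0,f)=d>K_I(f)$, the exponent $\alpha=\left(i(x_0,f)/K_I(f)\right)^{1/(n-1)}$ exceeds $1$, so $|f(y)-x_0|\leq C|y-x_0|^{\alpha}<|y-x_0|$ for $y$ near $x_0$; thus $x_0$ is an attracting fixed point and a punctured neighbourhood of it cannot consist of escaping points. (The same estimate, applied at the conjugated fixed point at infinity where the local index is again $d$ by Lemma \ref{deglem}, is how the paper actually derives the growth inequality $|f(x)|\geq c|x|^{\alpha}$ that you cite in the first step.) Until you supply an argument for the totally invariant fixed-point case, the proof of perfectness is incomplete.
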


We collect the following results on $I(f)$ and $\partial I(f)$.

\begin{theorem}
\label{further}
Let $f:\R^{n}\rightarrow \R^{n}$ be $K$-quasiregular of polynomial type and suppose that the degree of $f$ is greater than $K_{I}$.
\begin{enumerate}
\item For any $k \geq 2$ we have $I(f^{k}) = I(f)$.
\item  The family of iterates $\{ f^{k}: k \in \N \}$ is equicontinuous
on $I(f)$ and not equicontinuous at any point of $\partial I(f)$, with respect to the spherical metric on $\overline{\R^n}$.
\item $\partial I(f)$ is infinite.
\item $I(f), \partial I(f)$ and $\R^{n} \setminus \overline{I(f)}$ are completely invariant.
\item $I(f)$ is connected.
\end{enumerate}
\end{theorem}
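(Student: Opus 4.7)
The approach combines Theorem~\ref{polyqr} with the observation that, under polynomial type, $f$ extends to a proper, continuous, open surjection $\overline{f}\colon\overline{\R^n}\to\overline{\R^n}$ fixing $\infty$, with $\overline{f}^{-1}(\infty)=\{\infty\}$, so the local index satisfies $i(\infty,\overline{f})=d:=\deg(f)\geq 2$. Part (i) is immediate: $I(f)\subseteq I(f^k)$ is trivial, and conversely $(f^k)^n(x)\to\infty$ forces $f^{kn+j}(x)=f^j(f^{kn}(x))\to\infty$ for $0\le j<k$, because each $f^j$ is continuous at $\infty$ in the spherical metric. Part (iii) follows from Theorem~\ref{polyqr} once one knows $\partial I(f)$ is non-empty; this in turn holds because $I(f)\neq\overline{\R^n}$, as witnessed, for example, by a periodic point of $f$ in $\R^n$ produced by a Lefschetz computation on the $S^n$-extension (with $d\geq 2$ the index at $\infty$ cannot absorb all of $L(f^m)=1\pm d^m$). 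A non-empty perfect set is infinite.

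For part (ii), pick $R$ large enough that $W:=\{|y|>R\}$ lies in $I(f)$ (by Theorem~\ref{polyqr}) and is forward invariant under $f$ (by polynomial type). Given $x_0\in I(f)$, choose $N$ with $f^N(x_0)\in W$ and by continuity a neighbourhood $U$ of $x_0$ with $f^N(U)\subseteq W$; all further iterates stay in $W$, and the minimum-modulus growth supplied by $d>K_I$ makes $f^k(U)$ concentrate toward $\infty$ in the spherical metric, yielding equicontinuity on $I(f)$. At a point $x_0\in\partial I(f)$, equicontinuity fails because every neighbourhood of $x_0$ contains both points of $I(f)$ (whose orbits enter arbitrarily small spherical discs about $\infty$) and points whose orbits remain bounded, so the spherical diameters of $f^k(U)$ stay bounded below. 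Part (iv) is general topology: the equivalence $x\in I(f)\Leftrightarrow f(x)\in I(f)$ immediately yields $f^{-1}(I(f))=I(f)$, and for the continuous open surjection $f$ the identity $f^{-1}(\overline{A})=\overline{f^{-1}(A)}$ holds for every $A$, so $\overline{I(f)}$ is also completely invariant; hence so are $\partial I(f)$ and $\R^n\setminus\overline{I(f)}$ by taking differences.

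Part (v) is the main obstacle. The plan has three steps. First, with $W$ as above (connected, forward invariant, contained in $I(f)$), we have $I(f)=\bigcup_{n\geq 0} f^{-n}(W)$, an increasing union of open sets. Second, the crucial topological claim: $f^{-1}(V)$ is connected for every connected open $V\subseteq\overline{\R^n}$ containing $\infty$. Indeed, let $U$ denote the component of $f^{-1}(V)$ containing $\infty$; the restriction $f|_U\colon U\to V$ is proper and open, hence surjective of degree $\sum_{x\in f^{-1}(\infty)\cap U} i(x,f)=i(\infty,f)=d$. Any further component $U'$ would yield a second proper open surjection $f|_{U'}\colon U'\to V$ of positive degree, contradicting that the degrees over components of $f^{-1}(V)$ sum to the total degree $d$. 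Third, inductively each $f^{-n}(W)$ is then connected, so the increasing union $I(f)$ is connected. The hard step is the second, which hinges on the polynomial-type identity $i(\infty,f)=d$ to saturate the degree at the component containing $\infty$ and thereby preclude any other component.
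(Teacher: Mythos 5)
Your proposal is correct; parts (i), (ii) and (iv) run essentially parallel to the paper's proof (for (ii) the paper proves uniform convergence to $\infty$ on a compact neighbourhood via a nested open cover $V_j=\{y\in V:|f^j(y)|>R'\}$, whereas you shrink a neighbourhood by continuity of a single iterate $f^N$ --- both give equicontinuity). The genuine divergences are in (iii) and (v). For (iii) the paper's route is more elementary: if $I(f)=\overline{\R^n}$ then, by compactness and the locally uniform convergence from part (ii), some iterate $f^N$ maps $\overline{\R^n}$ into $\{|x|>1\}$, contradicting the surjectivity of the polynomial type map $f^N$; your Lefschetz--Hopf computation ($L(f^m)=1+(-1)^n d^m$, with fixed-point index $1$ at the attracting fixed point $\infty$, so the remaining index $(-1)^n d^m\neq 0$ forces a fixed point of $f^m$ in $\R^n$) is valid and in fact yields a periodic point, but it relies on the additivity/normalization of the fixed-point index, which you should make explicit; the paper instead obtains an honest fixed point by citing Siebert's proper-map lemmas. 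For (v) your argument is genuinely different and arguably cleaner: you show that $f^{-1}(V)$ is connected for every connected open $V\ni\infty$, because the component of $f^{-1}(V)$ containing $\infty$ already carries the full degree $d=i(\infty,f)$ by Lemma \ref{deglem} (as $f^{-1}(\infty)=\{\infty\}$), leaving no degree for a second component, and you then exhaust $I(f)$ as the increasing union $\bigcup_{k}f^{-k}(W)$ of connected open sets all containing $\infty$. The paper instead supposes a bounded component $U_1$ of $I(f)$ exists, argues that some iterate maps it onto the unbounded component $U_0$, and derives a contradiction from the absence of poles; your degree-saturation argument avoids the tersest step of that version, namely justifying $f^j(U_1)=U_0$ rather than merely $f^j(U_1)\subseteq U_0$.
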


Finally we show that the boundary of the escaping set coincides with the Julia set for uniformly quasiregular mappings. The fact that the Julia set of a uniformly quasiregular mapping in $\overline{\R ^{n}}$ is perfect is known and proved in \cite{Sthesis}. We provide a proof for the convenience of the reader.

\begin{theorem}
\label{uqr}
Let $n \geq 2$ and $f:\R^{n} \rightarrow \R^{n}$ be a $K$-uqr mapping which is not injective. Then $\partial I(f) = J(f)$ and is an infinite perfect set.
\end{theorem}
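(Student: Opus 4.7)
The plan is to establish $\partial I(f)=J(f)$ by two inclusions and then verify that $J(f)$ is infinite and perfect. Throughout I use the uqr Fatou--Julia framework, in which $F(f)$ is the largest open subset of $\overline{\R^n}$ on which $\{f^k\}$ is equicontinuous in the spherical metric and $J(f):=\overline{\R^n}\setminus F(f)$ is closed and completely invariant. I also invoke Miniowitz's quasiregular version of Montel's theorem: for some $q=q(n,K)$, any family of $K$-quasiregular maps on a domain that omits $q$ values in $\overline{\R^n}$ is normal. A standard consequence, freely used below, is that the backward orbit $\bigcup_k f^{-k}(y)$ is dense in $J(f)$ for every $y\in\overline{\R^n}$ outside a finite exceptional set $E(f)$.

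For $\partial I(f)\subseteq J(f)$, given $x\in F(f)$ and the spherical equicontinuity of $\{f^k\}$ at $x$, I split on whether $x\in I(f)$. If $x\in I(f)$, then $f^k(x)\to\infty$, and equicontinuity propagates this escape to a neighborhood of $x$, placing it in $I(f)$. If $x\notin I(f)$, some subsequence $f^{k_j}(x)$ remains in a compact $K\subset\R^n$, and equicontinuity keeps $f^{k_j}(y)$ in a slightly larger compact set for all $y$ near $x$, so that neighborhood misses $I(f)$. Either way $x\notin\partial I(f)$. For $J(f)\subseteq\partial I(f)$, I separately show $J(f)\subseteq\overline{I(f)}$ and $J(f)\cap\operatorname{int}(I(f))=\emptyset$ via backward-orbit density. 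For the first I produce a point $y_0\in I(f)\setminus E(f)$: in the polynomial-type case non-injectivity gives $\deg(f)\geq 2$, making $\infty$ a superattracting fixed point of the natural extension, so a punctured neighborhood of $\infty$ lies in $I(f)\setminus E(f)$; in the essential-singularity case I would appeal to \cite{BFLM}. Density of $\bigcup_k f^{-k}(y_0)$ in $J(f)$ then yields, for each $x\in J(f)$ and any neighborhood $V$ of $x$, some $z\in V$ with $f^k(z)=y_0$, so $z\in I(f)$. For the second part I pick a non-exceptional fixed point $y_1$ of $f$, which exists because $\deg(f)\geq 2$; if $x\in J(f)$ had a neighborhood $U\subseteq I(f)$, density would produce $z\in U$ with $f^k(z)=y_1\notin I(f)$, contradicting $z\in I(f)$.

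Finally, for the infinite and perfect claim I would adapt the argument from \cite{Sthesis}: if $J(f)$ were finite, total invariance would make all its points periodic, and Miniowitz applied on a small neighborhood of such a point would force normality and contradict $x\in J(f)$; perfectness then follows because the backward orbit of a non-exceptional $y\in J(f)\setminus\{x\}$ supplies points of $J(f)\setminus\{x\}$ accumulating at $x$. The main obstacle I anticipate is securing $y_0\in I(f)\setminus E(f)$ in the essential-singularity case, where $I(f)\neq\emptyset$ for a general non-injective uqr map requires a careful construction rather than an immediate superattracting fixed-point argument.
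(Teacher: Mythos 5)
Your overall architecture matches the paper's: non-emptiness of $I(f)$, then $\partial I(f)\subseteq J(f)$ via equicontinuity, then $J(f)\subseteq\overline{I(f)}$ and $J(f)\cap\operatorname{int}(I(f))=\emptyset$, then infiniteness and perfectness via the exceptional set. The first inclusion is fine (and your direct equicontinuity argument is if anything cleaner than the paper's normality argument), and your route to $J(f)\subseteq\overline{I(f)}$ via backward-orbit density of a non-exceptional escaping point is a legitimate alternative to the paper's device of feeding the $q$ distinct forward iterates $y_1,\dots,y_q$ of one escaping point into Miniowitz's theorem. But there are genuine gaps. The most serious is your treatment of $J(f)\cap\operatorname{int}(I(f))=\emptyset$: you pick ``a non-exceptional fixed point $y_1$ of $f$, which exists because $\deg(f)\ge 2$.'' That justification only applies in the polynomial-type case; when $f$ has an essential singularity at infinity the degree is infinite and $f$ need not have any fixed point at all. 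The paper sidesteps this entirely with Siebert's theorem (Theorem 3.3.6 of \cite{Siebert}): if $U\subseteq I(f)$ then no iterate $f^k$ has a fixed point in $U$ (escaping points cannot be periodic), so $\{f^k|_U\}$ is normal and $U\subseteq F(f)$. If you want to keep your backward-orbit route, you must replace the fixed point by a periodic point of period $2$, whose existence in the essential-singularity case comes from \cite{Bergweiler2}, not from a degree count. A related soft spot: your claim that $\infty$ is superattracting merely because $\deg(f)\ge 2$ hides the real mechanism, namely that one must pass to an iterate $f^k$ with $d^k>K\ge K_I(f^k)$ (using uniform quasiregularity) before the H\"older estimate at $\infty$ gives an exponent $\alpha>1$; this is exactly Lemma \ref{lemma5.1} of the paper.

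The infiniteness and perfectness steps also need repair. For infiniteness, ``Miniowitz applied on a small neighbourhood of a periodic point of $J(f)$ forces normality'' does not work as stated: the iterates on such a neighbourhood do not obviously omit $q$ values. The correct argument, for which you already have the ingredients, is that a finite completely invariant $J(f)$ consists of points with finite backward orbits, such points lie in $E(f)$, and $E(f)\subseteq F(f)$ --- a contradiction since $J(f)\neq\emptyset$ for a non-injective uqr map by \cite{IM}. For perfectness, your one-line argument fails when the given point $x$ itself lies on the backward orbit $O^{-}(y)$ of the chosen $y$, since then the density of $O^{-}(y)$ in $J(f)$ near $x$ could a priori be witnessed by $x$ alone. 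You must either choose $y\in J(f)\setminus(\{x\}\cup O^{+}(x))$ (possible once $J(f)$ is known to be infinite, as $O^{+}(x)$ is countable) or, as the paper does, first manufacture a non-periodic point $x_1$ with $x_1\notin O^{-}(x_1)$, conclude that $x_1$ is not isolated in $J(f)=\overline{O^{-}(x_1)}$, and then spread non-isolation over all of $J(f)$ by complete invariance.
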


In particular, a non-injective uniformly quasiregular map has a non-empty escaping set (see Lemma \ref{lemma5.1}).

In view of Theorem \ref{polyqr} and results of \cite{BFLM} stating that $I(f)$ is non-empty if $f$ is a quasiregular mapping with an essential singularity at infinity, it is natural to ask the following question in analogy to the case of transcendental entire functions. \emph{If $f:\R^{n} \rightarrow \R^{n}$ is a $K$-quasiregular mapping with an essential singularity at infinity, then is the boundary of $I(f)$ always a perfect set?}

See remarks (iii) and (iv) below for partial results in this direction.

\begin{rem}\mbox{}
\begin{enumerate}
\item Theorem \ref{polyqr} is sharp as the following example shows (see \cite[p.13]{Rickman}).
Let $n \geq 2$, $k \in \N$ and consider the mapping $f:(r, \varphi, y) \mapsto (r, k \varphi, y)$ in cylindrical coordinates in $\R^{n}$ (i.e. $y \in \R^{n-2}$). The branch set of $f$ is the $(n-2)$-dimensional hyperplane defined by $r=0$.
The degree of $f$ can be shown to be $k$, and further $K_{I}(f) = k$. However $\av f(x) \av = \av x \av$ for all $x \in \R^{n}$, and so $I(f)\cap\R^n$ is empty.
\item
By \cite{Hinkkanen}, every uniformly quasiregular mapping $f:\R^{2} \rightarrow \R^{2}$ can be conjugated via a quasiconformal map $h:\R^2 \rightarrow \R^2$ to an analytic function $g$, that is $f = h^{-1} \circ g \circ h$. In this case, the conclusions of Theorem \ref{uqr} follow from the standard analogous results for analytic functions.
\item
When $n \geq 3$, all the known examples of uniformly quasiregular maps are of polynomial type. It is an interesting question as to whether there exist any uniformly quasiregular maps with an essential singularity at infinity, since then Theorem \ref{uqr} would give quasiregular maps which are not of polynomial type for which $\partial I(f)$ is perfect.
\item
In \cite{Bergweiler3} it is shown that for certain Zorich-type maps ($n$-dimensional versions of the exponential map), the set of points  which do not converge to a certain fixed point form Devaney hairs. It follows from the proof of this that the escaping set consists of these hairs together with some of the endpoints, and as such, the boundary of the escaping set of these maps must be perfect.
\item
If $f$ is allowed to have poles, while still having finite degree, then $f$ is said to be quasirational. In this case, provided that $f$ fixes infinity and the topological index of $f$ at infinity is greater than $K_{I}$, the methods of Theorem \ref{polyqr} remain valid and give an unbounded component of the escaping set
and also show that $\partial I(f)$ is perfect. In this case $I(f)$ may no longer be connected, since it could contain bounded components consisting of neighbourhoods of poles of $f$.
\item
The proof of Theorem \ref{polyqr} shows that $\partial I(f)$ is bounded when $f$ is of polynomial type and the degree of $f$ is greater than $K_{I}$. If $f$ has an essential singularity at infinity, then $\partial I(f)$ is unbounded. To prove this, observe that $I(f)$ is unbounded by its non-emptiness \cite{BFLM}. To see that the complement of $I(f)$ is unbounded, note that $f$ has infinitely many $2$-periodic points by \cite{Bergweiler2}. Then the Big Picard Theorem for quasiregular mappings (see Theorem~2.27 of \cite[p.87]{Rickman}) shows that for any $R>0$, the domain $\{x \in \R^{n}:\av x \av >R\}$ contains a pre-image of one of these periodic points. This is in direct analogy with the Julia set of an analytic function on the plane being bounded or unbounded respectively in the polynomial and transcendental cases.
\item
In \cite{DL}, a Julia set for quasiregular mappings of polynomial type in dimension $2$ is investigated, although there a quasiregular map is defined as a composition of a quasiconformal and an analytic map. This decomposition is not available in dimensions greater than $2$. The Julia set is defined to be the set of points $z$ for which any neighbourhood $U$ of $z$ has the property that
\begin{equation*}
\overline{\R^{2}} \setminus \{a,b \} \subset \bigcup _{k \geq 0} f^{k}(U) ,
\end{equation*}
where $a,b$ are two possible exceptional values, independent of $z$.
\end{enumerate}
\end{rem}

\section{Results needed for the proofs}

Write $\av x \av = \av (x_{1},...,x_{n}) \av = \left( \sum _{i=1}^{n} x_{i}^{2} \right) ^{1/2}$ for the Euclidean norm in $\R^{n}$ and $B(x,r) = \{ y \in \R^{n}: \av x - y \av < r\}$ for the ball of radius $r$ in this norm.
A continuous mapping $f:G \rightarrow \R^{n}$ from a domain $G\subseteq \R^{n}$ is called quasiregular if $f$ belongs to the Sobolev space $W_{n,loc}^{1}(G)$ and there exists $K \in [1, \infty)$ such that
\begin{equation}
\label{eq2.1}
\av f'(x) \av ^{n} \leq K J_{f}(x)
\end{equation}
almost everywhere in $G$. Here $J_{f}(x)$ denotes the Jacobian determinant of $f$ at $x \in G$. The smallest constant $K \geq 1$ for which (\ref{eq2.1}) holds is called the outer dilatation $K_{O}(f)$ of $f$. If $f$ is quasiregular, then we also have
\begin{equation}
\label{eq2.2}
J_{f}(x) \leq K' \inf_{\av h \av = 1} \av f'(x)h \av^{n}
\end{equation}
almost everywhere in $G$ for some $K' \in[1, \infty)$.
The smallest constant $K' \geq 1$ for which (\ref{eq2.2}) holds is called the inner dilatation $K_{I}(f)$ of $f$. The maximal dilatation $K=K(f)$ of $f$ is the larger of $K_{O}(f)$ and $K_{I}(f)$. A map is called $K$-quasiregular if $K(f) \leq K$. It is well known that quasiregular mappings are open and discrete \cite{Reshetnyak}. For further details on the basic theory of quasiregular mappings, we refer to \cite{Rickman}.

If $f:G \rightarrow \overline{\R^{n}} = \R^{n} \cup \{ \infty \}$ is continuous, then $f$ is called quasimeromorphic if each $x \in G$ has a neighbourhood $U_{x}$ such that either $f$ or $g \circ f$ is a quasiregular map of $U_{x}$ into $\R^{n}$, where $g$ is a sense-preserving M\"{o}bius map of $\overline{\R^{n}}$ with $g(\infty) \in \R^{n}$.
For such a quasimeromorphic map the topological index of $f$ at $x\in G$ is denoted by $i(x,f)$ and may be defined as the infimum of
\[ \sup_{y\in\overline{\R^n}}|f^{-1}(y)\cap U| \]
when $U$ runs through all neighbourhoods of $x$.

It is clear that a polynomial type mapping can be extended to a continuous open mapping from $\overline{\R^{n}}$ to itself which fixes the point at infinity. The next result summarizes some basic properties of polynomial type maps.

\begin{lemma}\label{deglem}
Let $f$ be a quasiregular mapping of polynomial type of degree $d$. Then $f$ does not omit any value in $\R^n$ and the iterate $f^k$ has degree $d^k$. Moreover, if a point $x$ is such that $f^{-1}(x)=\{x\}$, then $i(x,f)=d$.
\end{lemma}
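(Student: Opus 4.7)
The plan is to work with the continuous open extension $\tilde f:\overline{\R^n}\to\overline{\R^n}$ of $f$ (already highlighted in the paragraph preceding the lemma) and to apply the standard topological-degree machinery for proper quasiregular maps from Rickman's monograph. The key ingredient is that every proper quasiregular map $f:\R^n\to\R^n$ has a well-defined integer degree $N(f)\geq 1$ satisfying
\[ \sum_{x\in f^{-1}(y)} i(x,f)\;=\;N(f) \qquad\text{for every }y\in\R^n, \]
together with the fact that the branch set $B_f$ of any quasiregular map has Lebesgue measure zero, and so does its image $f(B_f)$.

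For part (i), the image $\tilde f(\overline{\R^n})$ is nonempty, compact (continuous image of a compact set) and open (by openness of $\tilde f$), hence a clopen subset of the connected space $\overline{\R^n}$; thus it equals all of $\overline{\R^n}$. Since $\tilde f^{-1}(\infty)=\{\infty\}$, every $y\in\R^n$ is attained by $f$.

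For part about the local index, the first task is to identify $d$ with $N(f)$. Because each $i(x,f)\geq 1$, the degree formula gives $|f^{-1}(y)|\leq N(f)$ for all $y$, hence $d\leq N(f)$. Conversely, at any value $y$ outside the measure-zero set $f(B_f)$ every preimage satisfies $i(x,f)=1$, forcing $|f^{-1}(y)|=N(f)$ and so $d\geq N(f)$. Evaluating the degree formula at a point $x$ with $f^{-1}(x)=\{x\}$ then collapses the sum to the single term $i(x,f)=N(f)=d$.

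For the iterate, $f^k$ is again proper and quasiregular, so it has its own degree $N(f^k)$. Using the index chain rule $i(z,f^k)=i(z,f^{k-1})\cdot i(f^{k-1}(z),f)$ and grouping $\sum_{z\in f^{-k}(y)} i(z,f^k)$ according to the value of $f^{k-1}(z)\in f^{-1}(y)$, a short induction yields $N(f^k)=N(f^{k-1})\cdot N(f)=d^k$. Applying the argument from the preceding paragraph to $f^k$ then identifies $\deg(f^k)$ with $N(f^k)$, giving $\deg(f^k)=d^k$. The only step that is not pure bookkeeping is the identification $d=N(f)$, which is the main obstacle and depends on having enough regular values; everything else reduces to compactness/openness of $\tilde f$ or to iterating the degree formula.
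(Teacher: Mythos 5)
Your proposal is correct and takes essentially the same route as the paper: the surjectivity argument (open plus compact, hence clopen in the connected sphere) is identical, and the degree and index claims are exactly the content of the references the paper cites (Rickman's Proposition I.4.10 and \S I.4.2), namely the summation formula $\sum_{x\in f^{-1}(y)} i(x,f)=N(f)$, the identification $d=N(f)$ via regular values outside the null set $f(B_f)$, and the multiplicativity of the index under composition. The only difference is that you write out these standard arguments in full where the paper simply delegates them to Rickman.
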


\begin{proof}
The fact that $f$ takes every value in $\R^n$ follows from the observation that the image of $\overline{\R^n}$ under a continuous open map is both open and compact.

The degree and topological index are both defined in terms of induced mappings of homology groups in \cite{Rickman}. The equivalence of these definitions to the ones given above follows from Proposition 4.10 (2), (4) of \cite[p.19]{Rickman}. Using the homology definitions the conclusions of the lemma follow from \cite[\S I.4.2]{Rickman}.
\end{proof}

Another well-known fact is that quasiregular mappings are H\"{o}lder continuous. For our purposes, we will use a local version of H\"{o}lder continuity due to Martio \cite{Martio}.

\begin{theorem}[\cite{Martio}, see also {\cite[p.72]{Rickman}}]
\label{localholderthm}
Let $f:G\rightarrow \R^{n}$ be quasiregular and non-constant, and let $x \in G$. Then there exist positive constants $r$ and $C$ such that for $\av x - y \av <r$,
\begin{equation}
\label{localholder}
\av f(y) - f(x) \av \leq C \av y - x \av ^{\alpha}
\end{equation}
where
\begin{equation}
\label{localholder2}
\alpha = \left ( \frac{i(x,f)}{K_{I}(f)} \right ) ^{1/(n-1)}.
\end{equation}
\end{theorem}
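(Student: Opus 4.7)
The plan is to use a capacity comparison in Rickman's theory, evaluated on a normal neighbourhood of $x$, so that the local degree $i(x,f)$ and the inner dilatation $K_{I}(f)$ enter exactly and produce the exponent $\alpha$.

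Choose $s_0 > 0$ small enough that the normal neighbourhood $U := U(x, f, s_0)$ (the $x$-component of $f^{-1}(B(f(x), s_0))$) is relatively compact in $G$, $f|_U$ is a proper surjection onto $B(f(x), s_0)$ of topological degree $\mu := i(x,f)$, and $f^{-1}(f(x))\cap \overline U = \{x\}$. Fix $r_0>0$ with $\overline{B(x, r_0)} \subset U$; this will be the $r$ of the statement. For $y$ with $\rho := |y-x| \in (0,r_0)$, set $\ell := \min\{|f(z)-f(x)| : |z-x|=\rho\}$ and $L := \max\{|f(z)-f(x)| : |z-x|=\rho\}$, so in particular $|f(y)-f(x)|\leq L$. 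A topological-degree argument applied to $f|_{\overline{B(x,\rho)}}$, using that $f(\partial B(x,\rho))$ is disjoint from $B(f(x),\ell)$ and that the degree at $f(x)$ is $\mu\geq 1$, yields the inclusion $\overline{B(f(x), \ell)} \subset f(\overline{B(x, \rho)})$.

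I would then apply Rickman's $K_I$-capacity inequality for a normal domain of degree $\mu$, namely $\mu\,\mathrm{cap}(f(U), f(C)) \leq K_I(f)\,\mathrm{cap}(U, C)$ for any compact $C\subset U$, with $C=\overline{B(x,\rho)}$. The inclusion above combined with monotonicity of capacity in the inner set and the spherical-annulus capacity formula gives
\[ \mathrm{cap}\bigl(f(U), f(\overline{B(x,\rho)})\bigr) \geq \omega_{n-1}\bigl(\log(s_0/\ell)\bigr)^{1-n}, \]
while $B(x,r_0)\subset U$ together with monotonicity in the outer set yields $\mathrm{cap}(U, \overline{B(x,\rho)}) \leq \omega_{n-1}(\log(r_0/\rho))^{1-n}$. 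Chaining the three inequalities and raising both sides to the power $1/(1-n)<0$ (which reverses the inequality, since $n\geq 2$) collapses to $\log(s_0/\ell) \geq \alpha\log(r_0/\rho)$, i.e.\ $\ell \leq C_1\,\rho^{\alpha}$ with $\alpha=(\mu/K_I(f))^{1/(n-1)}$ and $C_1 = s_0/r_0^{\alpha}$.

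To pass from $\ell$ to $L$, I would invoke a ring/oscillation estimate for quasiregular maps at a point of local index $\mu$: for $\rho$ small enough, $L(x,f,\rho) \leq C_2\,\ell(x,f,\rho)$ (or the variant $L(x,f,\rho)\leq C_2\,\ell(x,f,2\rho)$) with $C_2=C_2(n,K_I(f),\mu)$, proved by a further modulus-of-curve-families argument on curves separating concentric spheres about $x$. Combining gives $|f(y)-f(x)|\leq L \leq C_1 C_2\,\rho^{\alpha}$ as required. I expect the main obstacle to be precisely this last step: the capacity argument above controls only the minimum image radius on the sphere $|z-x|=\rho$, and bridging to the maximum requires a separate oscillation estimate; moreover, obtaining the sharp exponent depends on the multiplicative constants $\mu$ and $K_I(f)$ in Rickman's inequality being exactly right, since any loss there would degrade $\alpha$ after the exponentiation step.
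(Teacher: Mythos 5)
This theorem is quoted in the paper without proof (it is Martio's result; see also Rickman, Theorem III.4.7), so your attempt can only be judged against the standard argument in those sources. Your overall plan is that argument: run the V\"ais\"al\"a--Poletsky $K_I$-inequality on a condenser inside the normal neighbourhood $U=U(x,f,s_0)$, compare both plates with spherical rings, and exponentiate to extract $\alpha=(i(x,f)/K_I(f))^{1/(n-1)}$. You also rightly flag that this controls only one of the two radii $\ell,L$ of $f(S(x,\rho))$ and that a bounded-linear-dilatation estimate $L(x,f,\rho)\le C(n,K,\mu)\,\ell(x,f,\rho)$ is needed to finish; that estimate is a genuine theorem of Martio--Rickman--V\"ais\"al\"a, so that acknowledged gap is closable.

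The real problem is the inequality $\mu\,\mathrm{cap}(fU,fC)\le K_I(f)\,\mathrm{cap}(U,C)$ ``for any compact $C\subset U$'', which is false in that generality and in particular not available for $C=\overline{B(x,\rho)}$. The factor $\mu$ in V\"ais\"al\"a's inequality comes from the $\mu$ essentially separate maximal lifts of a curve $\beta$ joining $fC$ to $\partial(fU)$; these lifts start from \emph{all} preimages of $\beta(0)$ in $U$, most of which may lie outside $C$, so they join $f^{-1}(fC)\cap U$ (not $C$) to $\partial U$. What one actually obtains is $\mathrm{cap}(fU,fC)\le \frac{K_I(f)}{\mu}\,\mathrm{cap}\bigl(U,\,f^{-1}(fC)\cap U\bigr)$, whose right-hand side dominates $\frac{K_I(f)}{\mu}\mathrm{cap}(U,C)$, so your chain breaks. (For $f(z)=z^{\mu}$ on the unit disc and $C$ a small ball centred away from $0$ one has $\mathrm{cap}(fU,fC)\approx\mathrm{cap}(U,C)$ with no gain of $1/\mu$.) The factor $\mu$ is exactly right only for saturated condensers such as $\bigl(U(x,f,s),\overline{U(x,f,t)}\bigr)$, whose inner plate is a full preimage; the literature proof uses these, and then pays for comparing $U(x,f,t)$ with round balls about $x$ by the inner/outer radius bound $L^*(x,f,t)\le C\,\ell^*(x,f,t)$ --- the same species of distortion estimate you defer to the last step. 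Since, as you yourself observe, any multiplicative loss in the capacity inequality degrades the exponent after exponentiation, this is not a cosmetic issue: the argument must be routed through the saturated condensers to obtain the sharp $\alpha$.
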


One of the main reasons for viewing quasiregular mappings as higher dimensional analogues of analytic functions in the plane is the following result of Rickman.

\begin{theorem}[\cite{Rickman}]
\label{rickman}
For every $n \geq 3$ and $K \geq 1$, there exists a positive integer $q=q(n,K)$ which depends only on $n$ and $K$, such that the following holds. Every $K$-quasimeromorphic mapping $f:\R^{n} \rightarrow \overline{\R^{n}} \setminus \{ a_{1},...,a_{m} \}$ is constant whenever
$m \geq q$ and $a_{1},...,a_{m}$ are distinct points in $\overline{\R^{n}}$.
\end{theorem}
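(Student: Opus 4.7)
This theorem is Rickman's Picard theorem, a deep generalisation of the classical planar Picard theorem; its proof occupies a substantial portion of \cite{Rickman}. The classical argument via the universal cover and the hyperbolic metric of $\C\setminus\{0,1\}$ has no good analogue when $n\geq 3$, so I would follow the value-distribution strategy originally developed by Rickman rather than any direct covering-space attack.

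After composing with a sense-preserving M\"obius self-map of $\overline{\R^n}$ we may assume $\infty\notin\{a_1,\ldots,a_m\}$, so that $f$ becomes a non-constant $K$-quasiregular entire map $\R^n\to\R^n\setminus\{a_1,\ldots,a_m\}$. The aim is then to produce a constant $q(n,K)$ such that no such map exists once $m\geq q(n,K)$. The principal technical tool is the conformal $n$-modulus of path families together with its quasi-invariance under $f$: the upper estimate $M(f\Gamma)\leq K_O(f)M(\Gamma)$ and, crucially, a sheet-counted lower estimate of Poletsky type involving $K_I(f)$. Applied to the family of paths joining $\partial B(0,r)$ to $\partial B(0,2r)$ inside the annulus, whose modulus is a positive dimensional constant, these estimates restrict how $f$ can distribute its values on large balls.

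The main step is then to combine these modulus bounds with spherical averages of the counting function $n(r,y)=\sum_{x\in f^{-1}(y)\cap B(0,r)}i(x,f)$. The decisive ingredient — and the hardest part of the proof — is the quasiregular analogue of Nevanlinna's Second Main Theorem: for any distinct $a_1,\ldots,a_m\in\overline{\R^n}$ and any non-constant $K$-quasimeromorphic $f$, a defect relation of the form $\sum_{j=1}^m \delta(a_j,f)\leq C(n,K)$ holds, with $C(n,K)$ depending only on $n$ and $K$. Once this is available, the theorem is immediate: omitted values have $\delta(a_j,f)=1$, so any integer $q(n,K)>C(n,K)$ does the job.

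The main obstacle is establishing that defect relation. Carrying it out requires a Whitney-type decomposition of $\R^n$ adapted to the branch set of $f$, careful estimates of the average number of preimages of small balls, and delicate control of the contribution of $i(x,f)>1$ branch points to the modulus inequalities. This is precisely the novel and technical content of Rickman's original work, and it is not something I would expect to reproduce, much less improve upon, in a short sketch; I would therefore rely on \cite{Rickman} for the quantitative machinery and only assemble the final Picard-type conclusion myself.
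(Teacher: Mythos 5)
The paper gives no proof of this statement---it is quoted directly from \cite{Rickman}---and your proposal does essentially the same thing, correctly identifying it as Rickman's Picard theorem and deferring the value-distribution machinery (modulus inequalities, counting functions, defect relation) to that reference. One small slip in your reduction: to pass to a quasiregular map into $\R^n$ you must arrange that $\infty$ \emph{is} one of the omitted values (postcompose with a M\"obius map sending some $a_j$ to $\infty$, leaving $m-1$ omitted finite points), rather than assuming $\infty\notin\{a_1,\ldots,a_m\}$, which would still allow $f$ to have poles; this costs only a shift of $q$ by one and does not affect the conclusion.
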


This leads to a version of Montel's Theorem for quasiregular maps.

\begin{theorem}[\cite{Miniowitz}]
\label{montel}
Let $\mathcal{F}$ be a family of $K$-quasimeromorphic mappings in a domain $G \subset \R^{n}$ and let $q=q(n,K)$ be Rickman's constant from Theorem \ref{rickman}. If there exist distinct points $a_{1},...,a_{q} \in \overline{\R^{n}}$ such that $f(G) \cap \{ a_{1},...,a_{q} \} = \emptyset$ for all $f \in \mathcal{F}$, then $\mathcal{F}$ is a normal family.
\end{theorem}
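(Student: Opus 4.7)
The plan is to deduce Theorem~\ref{montel} from Rickman's Picard-type theorem (Theorem~\ref{rickman}) via a Zalcman-style rescaling argument. Since $\overline{\R^n}$ is compact under the spherical metric $\sigma$, the Arzel\`a--Ascoli theorem reduces normality of $\mathcal{F}$ to equicontinuity on compact subsets of $G$. I would therefore argue by contradiction: suppose there exists $x_0 \in G$ at which $\mathcal{F}$ is not equicontinuous, so there exist $f_j \in \mathcal{F}$, points $y_j \to x_0$ and $\varepsilon>0$ with
\[
\sigma\bigl(f_j(y_j),\,f_j(x_0)\bigr) \geq \varepsilon.
\]

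The next step, which is the technical heart of the proof, is a quasiregular analogue of Zalcman's renormalization lemma. One introduces a suitable "spherical derivative" $f_j^{\#}$ measuring the local distortion of the spherical metric under $f_j$, and applies a Marty-type criterion: non-normality of $\mathcal{F}$ at $x_0$ forces $\sup_{f\in\mathcal{F}} f^{\#}$ to be unbounded on every neighbourhood of $x_0$. A standard rescaling trick (choose points and radii that maximize a rescaled version of $f_j^{\#}$ on shrinking balls around $x_0$) then produces sequences $x_j \to x_0$, $\rho_j \to 0^+$ such that the rescaled maps
\[
g_j(z) := f_j(x_j + \rho_j z)
\]
have uniformly bounded spherical derivatives on compact subsets of $\R^n$, yet $g_j^{\#}(0) \geq 1$ (say). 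A normal families / compactness argument for $K$-quasimeromorphic maps on $\R^n$ (which are closed under locally uniform spherical convergence to non-constant limits, with the dilatation bound preserved) then allows one to extract a subsequence converging locally uniformly to a non-constant $K$-quasimeromorphic map $g:\R^n \to \overline{\R^n}$.

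To finish, I would invoke a Hurwitz-type argument: since each $f_j$, and hence each $g_j$, omits the $q$ points $a_1,\dots,a_q \in \overline{\R^n}$, and since quasiregular maps are open and discrete, any limit $g$ that takes one of the values $a_i$ at some $z_0$ must actually be identically equal to $a_i$ near $z_0$, contradicting non-constancy (and connectedness of $\R^n$). Therefore $g:\R^n \to \overline{\R^n}\setminus\{a_1,\dots,a_q\}$ is a non-constant $K$-quasimeromorphic map avoiding $q = q(n,K)$ values, directly contradicting Theorem~\ref{rickman}.

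The main obstacle is setting up the Zalcman rescaling rigorously in the quasiregular setting: one needs a workable notion of spherical derivative for quasimeromorphic maps, a Marty-type characterization of normality in terms of it, and the fact that the class of $K$-quasimeromorphic maps is closed under locally uniform convergence (with preservation of the dilatation bound $K$). The Picard-step and the Hurwitz-step are then essentially formal consequences of Rickman's theorem and of the openness/discreteness of quasiregular maps respectively.
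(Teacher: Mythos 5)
The paper does not prove Theorem~\ref{montel} at all: it is quoted as a known result and attributed to the reference [Miniowitz], so there is no internal proof to compare against. Your outline is, in essence, the argument that Miniowitz himself gives in that reference --- a Zalcman-type rescaling lemma for $K$-quasimeromorphic mappings reduces the Montel-type statement to Rickman's Picard-type theorem (Theorem~\ref{rickman}), with a Hurwitz-type step to transfer the omitted values to the rescaled limit. So the strategy is sound and is the standard one.

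Two points in your sketch conceal genuine technical work rather than being ``essentially formal.'' First, the Marty-type criterion: a quasiregular map is only differentiable almost everywhere, so a pointwise spherical derivative $f^{\#}$ is not available in the form you use it, and non-normality does not immediately give you a point where ``$\sup f^{\#}$ blows up.'' The rescaling has to be set up instead through the metric characterization of normality (equicontinuity in the spherical metric on $\overline{\R^n}$) together with the uniform local H\"older continuity of $K$-quasimeromorphic maps, choosing the centres $x_j$ and radii $\rho_j$ by maximizing a normalized spherical oscillation rather than a derivative; this is the actual content of Miniowitz's rescaling lemma. You correctly flag this as the main obstacle, but it is worth being clear that it is not merely a matter of finding ``a workable notion of spherical derivative'' --- the selection argument itself must be rebuilt. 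Second, the Hurwitz step: openness and discreteness of the limit $g$ alone do not show that $g_j\to g$ locally uniformly with each $g_j$ omitting $a_i$ forces $g$ to omit $a_i$ or be constant; one needs a degree/topological index argument (if $g(z_0)=a_i$ and $g\not\equiv a_i$, then on a small sphere about $z_0$ the map $g$ stays away from $a_i$ while having positive local index at $z_0$, so for large $j$ the map $g_j$ must also take the value $a_i$ inside that sphere). Both gaps are fillable by known results, so the proposal is a correct plan, but neither step is formal.
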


\section{Proof of Theorem \ref{polyqr}}

Let $f:\R^{n} \rightarrow \R^{n}$ satisfy the hypothesis of the theorem with degree $d$. Since $f$ is of polynomial type, it can be extended to a mapping from $\overline{\R^{n}}$ to itself which fixes infinity. It is natural to proceed by conjugating this fixed point to the origin and then applying Theorem \ref{localholderthm}. To this end, let $A:\overline{\R^{n}} \rightarrow \overline{\R^{n}}$ be the inversion mapping in the unit sphere given by
\begin{equation*}
A(x) = \frac{x}{\av x \av ^{2}}.
\end{equation*}
Note that $A$ is sense-reversing, $A^{-1} = A$ and $A(0)=\infty$. Here and throughout, we use $0 \in \R^{n}$ to denote the point $(0,...,0)$. Consider the mapping
\begin{equation*}
g = A \circ f \circ A
\end{equation*}
from $\overline{\R^{n}}$ to itself which fixes $0$. By construction, $g$ is sense-preserving and an elementary calculation shows that $g$ is quasiregular with degree $d$ and $K_{I}(g) = K_{I}(f)$.

Applying Theorem \ref{localholderthm} to $g$ and $x=0$, we have from (\ref{localholder}) that
there exist constants $r>0$ and $C>0$ such that
\begin{equation}
\label{eq3.1}
\av g(y) \av \leq C \av y \av ^{\alpha}
\end{equation}
for $\av y \av <r$ and
\begin{equation}
\label{eq3.2}
\alpha = \left ( \frac{i(0,g)}{K_{I}(g)} \right ) ^{1/(n-1)}.
\end{equation}

Using Lemma \ref{deglem}, the fact that $g^{-1}(0)=\{0\}$, and the hypothesis of the theorem, we find that $i(0,g)=d>K_I(g)$ and so (\ref{eq3.2}) gives $\alpha>1$. Since $\av A(y) \av = \av y \av ^{-1}$, by substituting $f$ back into (\ref{eq3.1}) and writing $x=A(y)$ we obtain
\begin{equation}
\label{eq3.4}
\av f(x) \av \geq C^{-1} \av x \av ^{\alpha}
\end{equation}
for $\av x \av >R=1/r$.
Take
\begin{equation}
\label{eq3.4b}
R' = \max \{ R, (2C)^{1/(\alpha - 1)} \}.
\end{equation}
Then (\ref{eq3.4}) gives that
\begin{equation}
\label{eq3.4a}
\av f(x) \av > 2 \av x \av
\end{equation}
for $\av x \av > R'$, and so $f^k(x)\to\infty$ as $k\to\infty$. This implies that infinity is an attracting fixed point of $f$ and the basin of attraction includes $\{ x \in \R^{n} : \av x \av > R' \}$.
Consequently we have that $I(f)$ is non-empty, and further, that $I(f)$ contains a neighbourhood of infinity.

We show next that $I(f)$ is open. Let $x\in I(f)\cap\R^n$ and choose $n_0$ so that $|f^{n_0}(x)|>R'$. Then there exists $\epsilon>0$ such that
\begin{equation*}
\Omega : = B(f^{n_{0}}(x), \epsilon ) \subseteq \{y:|y|>R'\} \subseteq I(f).
\end{equation*}
Since quasiregular maps are continuous we can choose $\delta$ so small that
\[ f^{n_0}(B(x,\delta))\subseteq \Omega. \]
Then $B(x,\delta)\subseteq I(f)$ by the complete invariance of the escaping set.

It follows from the fact that $I(f)$ is open that $I(f)$ has no isolated points. To show that $\partial I(f)$ is perfect it just remains to show that the complement of $I(f)$ also has no isolated points. For the sake of contradiction we suppose that there exist $x\notin I(f)$ and $\delta>0$ such that $B(x,2\delta)\setminus\{x\}\subseteq I(f)$.
 Note that we must have $\av f^k(x) \av < R'$ for all $k$. Let $E = \partial B(x, \delta)$ and for $j \in \N$ define
\begin{equation}
\label{eq3.5a}
E_{j} = \{ y \in E : \av f^{j}(y) \av > R' \}.
\end{equation}
Since $f$ is continuous, each set $E_j$ is open and by (\ref{eq3.4a}) we have that $E_{j} \subseteq E_{j+1}$. Further, because $E \subset I(f)$, we see that
\begin{equation*}
E= \bigcup _{j=1}^{\infty} E_{j}.
\end{equation*}
Hence the $E_j$ form a nested open cover of the compact set $E$ and so we can find $N\in\N$ such that $E_N=E$. That is, $f^N(E) \subset \{ y: \av y \av > R' \}$. We now claim that $B(0,R')\subseteq f^N(B(x,\delta))$. Otherwise, using that $f^N(x)\in B(0,R')\cap f^N(B(x,\delta))$ we obtain a point of $\partial f^N(B(x,\delta))$ that lies in $B(0,R')$, contradicting the fact that $\partial f^N(B(x,\delta))\subseteq f^N(E) \subseteq \{y:|y|>R'\}$ because $f^N$ is an open map.

Therefore every point of $B(0,R')$ is the image of some point in $B(x,\delta)$ under $f^N$. In particular, since $x\in B(0,R')$ and $I(f)$ is completely invariant we must have that $f^N(x)=x$. Then every point of $B(0,R')\setminus\{x\}$ is the image of some point in $I(f)$ under $f^N$, and it follows that every point of $\R^n\setminus\{x\}$ escapes.

As $x$ is the only non-escaping point it must be a fixed point of $f$, and so by Theorem \ref{localholderthm} there exist constants $r>0$ and $C>0$ such that
\begin{equation}
\label{eq3.5}
\av  f(y) - x \av \leq C \av y - x \av ^{\alpha}
\end{equation}
for $\av y -x \av <r$. We have $\alpha >1$ here by (\ref{localholder2}) and the fact that $i(x,f) = d>K_{I}(f)$ by Lemma~\ref{deglem}. Since $\alpha >1$, if we choose $\eta >0$ small enough, (\ref{eq3.5}) implies that $\av f(y) - x \av < \av y -x \av$ for $\av y - x \av < \eta$.
This contradicts the fact that every point of $\R^{n} \setminus \{ x \}$ escapes and completes the proof that $\partial I(f)$ is perfect.

\section{Proof of Theorem \ref{further}}

\begin{description}
\item{\bf Part (i).} It is clear that $I(f)\subseteq I(f^k)$. For $x\in I(f^k)$ and $j \in \{1,...,k-1\}$ consider $f^{mk + j}(x)$. We have, by the continuity of $f$ on $\overline{\R^{n}}$,
\begin{equation*}
\lim _{m \rightarrow \infty} f^{mk+j}(x) = \lim_{m \rightarrow \infty} f^{j}(f^{mk}(x)) = f^{j}\left( \lim _{m\rightarrow\infty} f^{mk}(x) \right) = f^{j}(\infty) = \infty
\end{equation*}
for each $j$. This implies that $x \in I(f)$, and therefore $I(f^{k}) = I(f)$.

\item{\bf Part (ii).} Let $x \in I(f)$ and $V$ be a compact neighbourhood of $x$ such that $V \subset I(f)$, which we can find since $I(f)$ is open. Recall the definition of $R'$ from (\ref{eq3.4b}) and define $V_{j}$ analogously to (\ref{eq3.5a}) by
\begin{equation*}
V_{j} = \{ y \in V : \av f^j(y) \av > R'\}.
\end{equation*}
By the same method as used after (\ref{eq3.5a}), we can find $N$ such that $V_N = V$. This fact, together with (\ref{eq3.4a}), implies that $f^{N+k}(V) \subset \{ \av w \av > 2^{k} R' \}$ so that $f^{k}$ tends to infinity uniformly on $V$.

Now let $x \in \partial I(f)$ and $U$ be an open neighbourhood of $x$. We can find $y \in U \cap I(f)$ and $z \in U \cap (\R^{n} \setminus I(f))$. We simply observe that $f^k(y)\to\infty$ as $k\to\infty$, while $|f^k(z)|\le R'$ for all $k$. This implies that $\{f^{k} \}$ cannot be equicontinuous on $U$, and since $U$ was arbitrary, $\{f^{k} \}$ is not equicontinuous at $x$.

\item{\bf Part (iii).} By Theorem \ref{polyqr}, we have that $I(f)$ is non-empty and $\partial I(f)$ is perfect. Therefore, to show that $\partial I(f)$ is an infinite set, we have to show that there is a non-escaping point. The quickest way to see this is to observe that if $I(f)=\overline{\R^{n}}$ then, because we have locally uniform convergence on $I(f)$ by part (ii), we can find $N$ such that
\[ f^N\!\left(\overline{\R^n}\right)\subseteq\{x\in\R^n:|x|>1\}.\]
 This contradicts the fact that the polynomial type map $f^N$ takes every value in $\R^n$.

In fact we can show rather more, namely that $f$ must have a fixed point in $\R^n$. Using again the definition of $R'$ from (\ref{eq3.4b}), choose $S>R'$ large enough so that $f^{-1} \left ( B(0,S) \right )$ has only one connected component $U$.
By (\ref{eq3.4a}), we have that $\overline{U} \subset B(0,S)$. Since $f$ has finite degree and $U$ contains all the pre-images of points of $B(0,S)$,
we have that $f\av _{U}$ is a proper map
(see \cite[Lemma 2.1.4]{Siebert}) and so by applying \cite[Lemma 2.1.5]{Siebert}, we see that $f$ has a fixed point in $U$.

\item{\bf Part (iv).} It is clear that $I(f)$ is completely invariant.
If $x \in \partial I(f)$, then it is also easy to see that any neighbourhood of $f(x)$ contains points of $I(f)$ and also of $\R^{n} \setminus I(f)$ and the same is true of any $y \in f^{-1}(x)$. Therefore $\partial I(f)$ is completely invariant. Since $I(f)$ and $\partial I(f)$ are completely invariant, it follows that $\R^{n} \setminus \overline{I(f)}$ is completely invariant since $f$ is surjective.

\item{\bf Part (v).} Suppose that $U_{0}$ is the unbounded component of $I(f)$ and $U_{1}$ is a bounded component of $I(f)$. Then since $f$ is surjective, $U_{1} \subset I(f)$ and $I(f)$ is completely invariant, it follows that there exists $j \in \N$ such that $f^{j}(U_{1}) = U_{0}$. Therefore there must be a pole of $f^{j}$ in $\overline{U_{1}}$, which is a contradiction since $f$ has no poles.
\end{description}

\section{Proof of Theorem \ref{uqr}}

We first show that the hypotheses of the theorem imply that $I(f)$ is non-empty.

\begin{lemma}
\label{lemma5.1}
Let $f:\R^{n} \rightarrow \R^{n}$ be a $K$-uqr mapping which is not injective. Then $I(f)\cap\R^n$ is non-empty.
\end{lemma}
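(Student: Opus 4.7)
The plan is to split into two cases according to the behaviour of $f$ at infinity, in each case reducing to a result already stated.

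Case A: $f$ is of polynomial type. Because $f$ is not injective, there exist distinct points $a,b\in\R^n$ with $f(a)=f(b)$, so the definition (\ref{eqn:def degree}) forces $d:=\deg(f)\ge 2$. The uniform quasiregularity hypothesis says that every iterate $f^k$ is again $K$-quasiregular, so in particular $K_I(f^k)\le K$ for all $k\in\N$. By Lemma \ref{deglem}, $\deg(f^k)=d^k$, which grows without bound. Choose $k$ large enough that $d^k>K\ge K_I(f^k)$. Then the hypotheses of Theorem \ref{polyqr} are satisfied for $f^k$, and the proof of that theorem (specifically the assertion after (\ref{eq3.4a})) shows that every point with sufficiently large Euclidean norm lies in $I(f^k)$. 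Such points certainly belong to $\R^n$, so $I(f^k)\cap\R^n\ne\emptyset$.

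It remains to transfer this to $f$. The argument in Part (i) of Theorem \ref{further} only uses continuity of $f$ on $\overline{\R^n}$, which holds because $f$ is of polynomial type. Thus $I(f)=I(f^k)$, yielding $I(f)\cap\R^n\ne\emptyset$.

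Case B: $f$ has an essential singularity at infinity. In this situation the non-injectivity hypothesis plays no explicit role; instead we invoke the result of \cite{BFLM} quoted in the introduction, which asserts that $I(f)$ is non-empty whenever $f$ has an essential singularity at infinity. By the discussion following (\ref{escapingset}), the point at infinity is not in $I(f)$ in this case, so $I(f)\subseteq\R^n$ and hence $I(f)\cap\R^n\ne\emptyset$ immediately.

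The main technical point to verify carefully is the first case: one must check that $K_I$ of each iterate is bounded by the uniform constant $K$ (which is precisely what the uqr hypothesis provides) and that the degree of the iterate grows geometrically to eventually dominate $K$. Beyond that the argument is essentially bookkeeping, pulling together Theorem \ref{polyqr}, Lemma \ref{deglem}, and Part (i) of Theorem \ref{further}. No new estimate is needed.
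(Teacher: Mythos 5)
Your proposal is correct and follows essentially the same route as the paper: split according to whether $f$ has an essential singularity at infinity (handled by \cite{BFLM}) or is of polynomial type, and in the latter case note that non-injectivity forces degree $d\ge 2$, pass to an iterate $f^k$ with $d^k>K\ge K_I(f^k)$, apply Theorem \ref{polyqr}, and transfer back via Part (i) of Theorem \ref{further}. Your explicit remark that Part (i) only needs continuity of $f$ on $\overline{\R^n}$ (rather than the full hypothesis of Theorem \ref{further}) is a welcome clarification, but the argument is the same.
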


\begin{proof}
If $f$ has an essential singularity at infinity, then $I(f)\cap\R^n \neq \emptyset$ by results of \cite{BFLM}. We may therefore assume that $f$ is of polynomial type of degree $d\geq 2$ since $f$ is not injective. Then the degree of $f^{k}$ is $d^{k}$ by Lemma \ref{deglem}. Since $f^{k}$ is $K$-quasiregular, we can choose $k$ large enough so that $d^{k}>K\geq K_{I}$. By Theorem \ref{polyqr}, we have that $I(f^{k})\cap\R^n$ is non-empty.
Part (i) of Theorem \ref{further} then implies that $I(f)\cap\R^n$ is non-empty.
\end{proof}

In this section we use standard terminology from complex dynamics: $J(f)$ is the Julia set, $F(f)$ is the Fatou set,
\begin{equation*}
O^{+}(x) = \bigcup _{k \geq 1} f^{k}(x)
\end{equation*}
is the forward orbit of a point $x$ and
\begin{equation*}
O^{-}(x) = \bigcup _{k \geq 1} f^{-k}(x)
\end{equation*}
is the backward orbit of a point $x$.
Note that in this theorem, we are not assuming that $f$ is of polynomial type and hence defined at infinity. If $f$ is defined at infinity, then by Theorem \ref{further} (ii), infinity is contained in the Fatou set $F(f)$ and so $J(f)$ and $\partial I(f)$ are both bounded. If $f$ has an essential singularity at infinity, then according to the definition of $I(f)$ in (\ref{escapingset}), infinity is not in $I(f)$ and it is in neither $J(f)$ nor $F(f)$. Hence there is no ambiguity in considering both cases at the same time, where $f$ is defined or not at infinity.

\begin{lemma}
Let $f:\R^{n} \rightarrow \R^{n}$ be a $K$-uqr mapping which is not injective. Then we have
\begin{equation*}
J(f) = \partial I(f).
\end{equation*}
\end{lemma}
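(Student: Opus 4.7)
The plan is to establish $J(f) = \partial I(f)$ via two inclusions, using a normal-family limit argument for $\partial I(f) \subseteq J(f)$ and Miniowitz's Montel theorem (Theorem \ref{montel}) for $J(f) \subseteq \partial I(f)$.

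For the inclusion $\partial I(f) \subseteq J(f)$, I would argue by contradiction. Take $x \in \partial I(f)$ and let $U$ be a connected open neighborhood of $x$. Since $x$ is a boundary point, we can pick a sequence $y_n \in I(f) \cap U$ with $y_n \to x$ and some $z \in U \setminus I(f)$. Suppose $\{f^k\}$ is normal on $U$. Because $z \notin I(f)$, the sequence $f^k(z)$ does not tend to infinity, so some subsequence $f^{k_j}(z)$ remains in a compact subset of $\R^n$; passing to a further subsequence guaranteed by normality, $f^{k_j}$ converges locally uniformly on $U$ to a continuous $g:U \to \overline{\R^n}$ with $g(z) \in \R^n$ finite. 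On the other hand, for each fixed $n$ we have $g(y_n) = \lim_j f^{k_j}(y_n) = \infty$ because $y_n \in I(f)$. Thus $g$ is non-constant; by the standard fact that a locally uniform limit of $K$-quasimeromorphic mappings is either constant or itself $K$-quasimeromorphic, $g$ is $K$-qm on $U$. Quasimeromorphic maps are discrete, so $g^{-1}(\infty)$ is a discrete subset of $U$, contradicting the accumulation of the poles $y_n$ at $x$. Hence $\{f^k\}$ fails to be normal on $U$, so $x \in J(f)$.

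For the inclusion $J(f) \subseteq \partial I(f)$, take $x \in J(f)$ and any open neighborhood $U$. Since $\{f^k|_U\}$ is not a normal family, Theorem \ref{montel} implies that the exceptional set $E := \overline{\R^n} \setminus \bigcup_{k \ge 1} f^k(U)$ has at most $q(n,K) - 1$ points. I would then verify that both $I(f)$ and $\overline{\R^n}\setminus I(f)$ are infinite, so neither can be contained in $E$. For $I(f)$, by Lemma \ref{lemma5.1} there is $y_0 \in I(f) \cap \R^n$; the set $S := \{y_0\} \cup \bigcup_k f^{-k}(y_0)$ lies in $I(f)$ by complete invariance, and if $S$ were finite then $f|_S$ would be eventually periodic, contradicting $f^k(y_0)\to\infty$. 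For the complement, a non-injective uqr map has infinitely many periodic points (see \cite{Bergweiler2} and compare Theorem \ref{further}(iii) together with the degree growth of the iterates in the polynomial-type case). Picking $y_0' \in I(f) \cap \bigcup_k f^k(U)$ and $z_0 \in (\overline{\R^n}\setminus I(f)) \cap \bigcup_k f^k(U)$, and pulling each back through the corresponding iterate, complete invariance of both $I(f)$ and $\overline{\R^n}\setminus I(f)$ yields points of $U \cap I(f)$ and $U \setminus I(f)$. Therefore $x \in \partial I(f)$.

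The main obstacle lies in the first inclusion: one must organize the subsequence selection so that the limit $g$ is genuinely non-constant (finite at $z$, infinite along $y_n$) in order to apply the result that a non-constant local uniform limit of $K$-quasimeromorphic maps is itself $K$-quasimeromorphic, then exploit discreteness of $g^{-1}(\infty)$. A secondary technical point is confirming that the finite exceptional set allowed by Theorem \ref{montel} cannot swallow either $I(f)$ or its complement; this is handled via complete invariance, Lemma \ref{lemma5.1}, and the existence of infinitely many periodic points.
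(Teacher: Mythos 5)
Your overall strategy is legitimate and genuinely different from the paper's in both directions, but each half has a concrete gap. In the inclusion $\partial I(f)\subseteq J(f)$, the contradiction you extract from discreteness of $g^{-1}(\infty)$ requires infinitely many \emph{distinct} points of $I(f)$ accumulating at a point of $U$. If $x\in I(f)\cap\partial I(f)$ happened to be an isolated point of $I(f)$ --- a case you cannot exclude a priori when $f$ has an essential singularity at infinity, since openness of $I(f)$ is not available there at this stage --- then your sequence $y_n$ is eventually the single point $x$, and $g^{-1}(\infty)$ may be a singleton, which is perfectly compatible with $g$ being quasimeromorphic. The standard repair, and what the paper's one-line version of this step implicitly relies on, is the Hurwitz-type theorem of Miniowitz: a non-constant locally uniform limit of $K$-quasimeromorphic maps all omitting the value $\infty$ must itself omit $\infty$. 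Since every iterate $f^{k}$ maps $\R^{n}$ into $\R^{n}$, the limit $g$ cannot take the value $\infty$ anywhere unless it is constant, so a single escaping point in $U$ together with the finite value $g(z)$ already yields the contradiction.

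In the inclusion $J(f)\subseteq\partial I(f)$ you need $\overline{\R^{n}}\setminus I(f)$ to contain at least $q(n,K)$ points, and your justification does not deliver this in all cases. The reference \cite{Bergweiler2} gives infinitely many periodic points only when $f$ has an essential singularity at infinity; in the polynomial-type case, Theorem \ref{further}(iii) applied to iterates produces one fixed point of each iterate, and nothing in your argument prevents these from all coinciding. This is repairable (for polynomial type, pass to an iterate $f^{k}$ with $d^{k}>K\geq K_{I}$, note that $I(f)=I(f^{k})$ is then open with infinite perfect boundary by Theorems \ref{polyqr} and \ref{further}, and that $\partial I(f)$ is disjoint from the open set $I(f)$), but as written it is a gap. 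The paper sidesteps the issue entirely: it uses Montel's theorem only to prove $J(f)\subseteq\overline{I(f)}$, obtaining the $q$ required values along the forward orbit of a single escaping point (these are automatically distinct because the orbit tends to infinity), and then separately proves $\mathrm{int}(I(f))\subseteq F(f)$ via Siebert's normality criterion for families of quasiregular maps without fixed points; the two facts combine to give $J(f)\subseteq\partial I(f)$. Finally, a small slip: your set $S=\{y_0\}\cup\bigcup_k f^{-k}(y_0)$ is not forward invariant, so ``$f|_S$ eventually periodic'' does not bear on $f^{k}(y_0)\to\infty$; the forward orbit $O^{+}(y_0)$, which is infinite precisely because it tends to infinity, is what shows $I(f)$ is infinite.
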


\begin{proof}
The proof is based on the case of entire functions in the plane, see \cite{Eremenko}.

Let $x_{0} \in \partial I(f)$ and assume that $x_{0} \in F(f)$. Since $F(f)$ is open, there exists a neighbourhood $U_{0}$ of $x_{0}$ such that $U_{0} \subset F(f)$. Since $\{f^{k} \av _{U_{0}} : k \in \N \}$ is a normal family of $K$-quasiregular mappings and $U_0$ contains points of $I(f)$, it
follows that $f^{k} \rightarrow \infty$ in $U_{0}$. Therefore $U_{0} \subset I(f)$, contradicting the fact that $x_{0} \in \partial I(f)$ and so $\partial I(f) \subset J(f) $.

Now suppose that $x_{1} \in J(f)$ and $U_{1}$ is an open neighbourhood of $x_{1}$. By Lemma \ref{lemma5.1} there exists a point $y_{1} \in I(f)\cap\R^n$. For
$j=2,...,q$, define
\begin{equation*}
y_{j} = f(y_{j-1}),
\end{equation*}
where $q=q(n,K)$ is Rickman's constant from Theorem \ref{rickman}. Note that the $y_{j}$ are distinct because $y_{1} \in I(f)\cap\R^n$. Since $x_{1} \in J(f)$ the family $\{ f^{k} \av _{U_{1}}: k \in \N \}$ is not normal. By Theorem~\ref{montel}, $f^{k}(U_{1})$ meets the set $\{ y_{1},...,y_{q}\}$ for infinitely many values $k$. Therefore, there exist $w \in U_{1}$ and $k$ such that $f^{k}(w) = y_{j}$ for some $j \in \{1,...,q\}$. Thus $w \in U_{1}\cap I(f)$. This means that there exist points of $I(f)$ arbitrarily close to points of $J(f)$ and so $J(f) \subset \overline{I(f)}$.

Finally, let $x_{2} \in \text{int}(I(f))$ and $U_{2}$ be a neighbourhood of $x_{2}$ such that $U_{2} \subset I(f)$. Let $\mathcal{F} = \{ f^{k}|_{U_2} : k\in\N \}$. Since $U_{2}$ is contained in $I(f)$, it follows that none of the members of $\mathcal{F}$ have a fixed point in $U_{2}$. By Theorem 3.3.6 of \cite{Siebert}, it follows that $\mathcal{F}$ is normal and hence that $x_{2} \in F(f)$. This shows that $\text{int}(I(f)) \subset F(f)$ and we can conclude that $\partial I(f) = J(f)$.
\end{proof}

A consequence of Montel's Theorem (Theorem \ref{montel}) is that, following \cite{HMM}, we can define the exceptional set $E(f)$  of a $K$-uqr mapping to be the largest discrete completely invariant set such that $E(f)$ has the following properties: for any open set $U$ with $U \cap J(f) \neq \emptyset$ we have
\begin{equation*}
\R^{n} \setminus E(f) \subset \bigcup _{k \geq 0} f^{k}(U)
\end{equation*}
and for every point $x \notin E(f)$, we have
\begin{equation*}
J(f) \subset \overline{ O^{-}(x) }.
\end{equation*}
See \cite{HMM} for more details on the exceptional set, and in particular the following facts: $E(f)$ cannot contain more than $q=q(n,K)$ points, where again $q$ denotes Rickman's constant; $E(f)$ is contained in $F(f)$; and $E(f)$ contains those points whose backward orbit is finite. To finish the proof of Theorem \ref{uqr}, we need the following lemma, the proof of which is standard (cf. \cite{Bergweiler1,Sthesis}).

\begin{lemma}
Let $f:\R^n\to\R^n$ be a $K$-uqr mapping which is not injective. The Julia set $J(f)$ is equal to $\overline{O^{-}(x)}$ for any $x \in J(f)$, is an infinite set and is perfect.
\end{lemma}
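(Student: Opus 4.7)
The plan is to derive all three conclusions from the properties of the exceptional set $E(f)$ recorded just before the lemma, combined with complete invariance of $J(f)$, following the classical template for entire functions.

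First I would fix an arbitrary $x \in J(f)$ and prove $J(f) = \overline{O^{-}(x)}$ by a two-sided inclusion. Since $E(f) \subseteq F(f)$ we have $x \notin E(f)$, so the inclusion $J(f) \subseteq \overline{O^{-}(x)}$ holds by the second defining property of $E(f)$ quoted above. Conversely, $J(f)$ is closed and completely invariant (a standard consequence of the normal-family definition of $F(f)$), so $O^{-}(x) \subseteq J(f)$, and taking closures gives the reverse inclusion.

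For infinitude, since $E(f)$ contains every point with finite backward orbit and $x \notin E(f)$, the set $O^{-}(x)$ must be infinite, and hence so is $J(f)$. This step presupposes that $J(f)$ is itself non-empty, which I would justify via $J(f) = \partial I(f)$ from the previous lemma: in the polynomial type case, non-injectivity forces the degree $d \geq 2$, so applying Theorem \ref{polyqr} to a high iterate $f^k$ with $d^k > K_I$ together with Theorem \ref{further}(iii) makes $\partial I(f^k) = \partial I(f)$ non-empty; in the essential singularity case $I(f) \cap \R^n \neq \emptyset$ by Lemma \ref{lemma5.1}, while non-escaping periodic points from \cite{Bergweiler2} prevent $I(f) = \R^n$, so $\partial I(f) \neq \emptyset$.

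Finally, to show $J(f)$ is perfect I would verify that no $y \in J(f)$ is isolated, using a case split on periodicity. If $y$ is not periodic under $f$, apply $J(f) = \overline{O^{-}(y)}$ to obtain preimages of $y$ converging to $y$; none of them equals $y$ (otherwise $y$ would be periodic), and by complete invariance they lie in $J(f) \setminus \{y\}$. If $y$ is periodic then $O^{+}(y)$ is finite, so using that $J(f)$ is infinite we choose $z \in J(f) \setminus O^{+}(y)$, and a sequence in $O^{-}(z) \subseteq J(f)$ converges to $y$ while no term equals $y$, since that would force $z \in O^{+}(y)$. The main technical hurdle I anticipate is establishing the non-emptiness of $J(f)$ cleanly in the essential singularity setting; the rest is a direct adaptation of the classical Julia set arguments once the exceptional set machinery is in place.
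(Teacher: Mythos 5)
Your proposal is correct and follows essentially the same route as the paper: both rest on the properties of the exceptional set $E(f)$ to get $J(f)=\overline{O^{-}(x)}$ and infinitude, and both derive perfectness from the density of backward orbits. The only differences are cosmetic --- the paper cites \cite{IM} for $J(f)\neq\emptyset$ where you rederive it from Theorems \ref{polyqr} and \ref{further} and Lemma \ref{lemma5.1}, and for perfectness the paper exhibits a single non-periodic, non-isolated point and propagates along its backward orbit, whereas you run the periodic/non-periodic case split at every point, which is if anything slightly more self-contained.
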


\begin{proof}
Let $x \in J(f)$. Then $f^{-k}(x)$ is contained in $J(f)$ for each $k$, and since $J(f)$ is closed and completely invariant, it follows that
\begin{equation}
\label{eq4.1}
\overline{ O^{-}(x) } \subset J(f).
\end{equation}
As $x\notin E(f)$ we have equality in (\ref{eq4.1}).

The Julia set $J(f)$ is non-empty as $f$ is not injective \cite{IM}. Now, if $J(f)$ were finite, then it would consist of points whose backward orbits are finite. However, all such points are in $E(f)$ and hence in $F(f)$, which is a contradiction and so $J(f)$ must be an infinite set.

Choose $x_{0} \in J(f)$ and define $x_1$ as follows. If $x_{0}$ is not periodic, then set $x_{1}= x_{0}$. If $x_{0}$ is periodic, then the forward orbit $O^{+}(x_{0})$ is finite and since the backward orbit $O^{-}(x_{0})$ is infinite, we can choose $x_{1} \in O^{-}(x_{0}) \setminus O^{+}(x_{0})$ which is not periodic. In either case, it follows that $x_{1} \notin O^{-}(x_{1})$. Since $x_{1} \in \overline{O^{-}(x_{1})}$,
it follows that $x_{1}$ is not an isolated point of $J(f)$. Since $J(f)=\overline{O^{-}(x_{1})}$, it follows that no point of $J(f)$ is isolated and $J(f) = \partial I(f)$ is a perfect set.
\end{proof}

Department of Mathematics, \\ University Gardens, \\ University of Glasgow, \\G12 8QW. \\
email:\,{\tt a.fletcher@maths.gla.ac.uk}
\\ \\
School of Mathematical Sciences, \\University of Nottingham, \\
University Park, Nottingham, \\NG7 2RD.\\
email:\,{\tt Dan.Nicks@maths.nottingham.ac.uk}

\end{document}